\pdfoutput=1
\RequirePackage{ifpdf}
\ifpdf 
\documentclass[pdftex]{sigma}
\else
\documentclass{sigma}
\fi

\numberwithin{equation}{section}

\newtheorem{Theorem}{Theorem}[section]
{ \theoremstyle{definition}
\newtheorem{Definition}[Theorem]{Definition}
\newtheorem{Example}[Theorem]{Example}
\newtheorem{Remark}[Theorem]{Remark} }

\newcommand{\bigslant}[2]{{\raisebox{.2em}{$#1$}\big/\raisebox{-.2em}{$#2$}}}

\begin{document}

\allowdisplaybreaks

\newcommand{\arXivNumber}{1703.08279}

\renewcommand{\PaperNumber}{029}

\FirstPageHeading

\ShortArticleName{Symplectic Structures in Frame Bundles and Symplectic Cohomology}

\ArticleName{On the Symplectic Structures in Frame Bundles\\ and the Finite Dimension of Basic Symplectic\\ Cohomologies}

\Author{Andrzej CZARNECKI}

\AuthorNameForHeading{A.~Czarnecki}

\Address{Jagiellonian University, {\L}ojasiewicza 6, 30-348 Krakow, Poland}
\Email{\href{mailto:andrzejczarnecki01@gmail.com}{andrzejczarnecki01@gmail.com}}

\ArticleDates{Received February 16, 2018, in final form March 24, 2018; Published online March 30, 2018}

\Abstract{We present a construction (and classification) of certain invariant 2-forms on the real symplectic group. They are used to define a symplectic form on the quotient by a~maximal torus and to ``lift'' a symplectic structure from a~symplectic manifold to the bundle of frames. This is a by-product of a failed attempt to prove certain finiteness theorems for basic symplectic cohomologies. In the last part of the paper we include a valid proof.}

\Keywords{symplectic cohomology; basic cohomology}

\Classification{53C12; 57R18}

\section{Introduction}
The first part of this paper focuses on the obstacles arising in the proof of a foliated version of the main results on symplectic cohomology from \cite{tseng}. This should be viewed as a contribution to the discussion about the ``correct'' symplectic Hodge theory, a well established field of study, cf.~\cite{brylinski,cavalcanti,fernandezibanez,yau,yan}. The foliated analogues (cf.~\cite{baczekija,pawelija,he,pak}) help to better understand the limitations of the theories. For example, there is always a compatible metric for a symplectic structure on a manifold, whereas there need not be one for a transverse symplectic structure of a foliation, and in the present paper we highlight some consequences it has for the symplectic cohomologies defined in~\cite{tseng}.

It is well known that the basic cohomology of a Riemannian foliation is finite dimensional. There are in principle two ways of proving that: one is the Kamber--Tondeur groupoid-invariant version of elliptic analysis, cf.~\cite{azizeliptyczny}; the other is the Molino's structure theorems and relations between different cohomologies in appropriate spectral sequences of certain fiber bundles, cf.~\cite{aziztopologiczny}.

In the second part of this paper we show that the Kamber--Tondeur analysis applies to the proof of the following theorem.

\begin{Theorem}[finiteness theorem] \label{main} Basic symplectic cohomology of a transversally symplectic Riemannian foliation is finite dimensional.
\end{Theorem}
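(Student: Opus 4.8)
The plan is to build a transverse symplectic Hodge theory on basic forms and then feed the resulting Laplacians into the Kamber--Tondeur elliptic analysis of \cite{azizeliptyczny}. First I would fix the data: a Riemannian foliation with a bundle-like metric, and a basic, closed, transversally nondegenerate $2$-form $\omega$. Because $\omega$ is basic, the Lefschetz operator $L=\omega\wedge\cdot$ and its algebraic adjoint $\Lambda$ (contraction with the transverse Poisson bivector $\omega^{-1}$) both preserve the complex $\Omega_B^{\bullet}$ of basic forms, so the symplectic codifferential $d^{\Lambda}=[d,\Lambda]$ of \cite{tseng} restricts to $\Omega_B^{\bullet}$. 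This yields well-defined basic symplectic cohomologies $H_{B,d+d^{\Lambda}}^{\bullet}=(\ker d\cap\ker d^{\Lambda})/\operatorname{im} dd^{\Lambda}$ and $H_{B,dd^{\Lambda}}^{\bullet}=\ker dd^{\Lambda}/(\operatorname{im} d+\operatorname{im} d^{\Lambda})$; the ordinary basic de Rham cohomology $H_{B,d}^{\bullet}$ is already known to be finite dimensional, so the new content is the two symplectic ones.

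Next I would use the transverse metric to introduce the formal $L^2$-adjoints $d^{*}$ and $(d^{\Lambda})^{*}$ on basic forms and assemble the fourth-order, self-adjoint, nonnegative operator of Bott--Chern type,
\begin{multline*}
\Delta_{d+d^{\Lambda}}=d^{*}d+(d^{\Lambda})^{*}d^{\Lambda}+(dd^{\Lambda})(dd^{\Lambda})^{*}+(dd^{\Lambda})^{*}(dd^{\Lambda})\\
+\big((d^{\Lambda})^{*}d\big)\big((d^{\Lambda})^{*}d\big)^{*}+\big(d^{*}d^{\Lambda}\big)\big(d^{*}d^{\Lambda}\big)^{*},
\end{multline*}
together with its dual $\Delta_{dd^{\Lambda}}$. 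Pairing $\Delta_{d+d^{\Lambda}}\alpha$ with $\alpha$ exhibits its basic kernel as the space of basic forms that are $d$- and $d^{\Lambda}$-closed and orthogonal to $\operatorname{im} dd^{\Lambda}$, so that a transverse Hodge decomposition would give $\ker\Delta_{d+d^{\Lambda}}\cong H_{B,d+d^{\Lambda}}^{\bullet}$, and likewise $\ker\Delta_{dd^{\Lambda}}\cong H_{B,dd^{\Lambda}}^{\bullet}$.

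The decisive step is transversal ellipticity. For a nonzero transverse covector $\xi$ one has $\sigma(d)(\xi)=\xi\wedge\cdot$ and, by nondegeneracy of $\omega$, $\sigma(d^{\Lambda})(\xi)=\pm\iota_{\omega^{-1}(\xi)}$ with $\omega^{-1}(\xi)\neq 0$; hence the pair $(d,d^{\Lambda})$ has, symbolwise, the structure of a differential and a codifferential, related by the linear isomorphism $\xi\mapsto\omega^{-1}(\xi)$. I would check that the resulting transverse principal symbol of $\Delta_{d+d^{\Lambda}}$ is invertible on the normal bundle, exactly as for the Bott--Chern Laplacian, so that $\Delta_{d+d^{\Lambda}}$ and $\Delta_{dd^{\Lambda}}$ are transversally elliptic. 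Granting this, the Kamber--Tondeur groupoid-invariant theory of transversally elliptic operators applies: such operators on a Riemannian foliation have finite-dimensional spaces of basic solutions, and the isomorphisms of the previous step then force $H_{B,d+d^{\Lambda}}^{\bullet}$ and $H_{B,dd^{\Lambda}}^{\bullet}$ to be finite dimensional.

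I expect the transversal ellipticity to be the main obstacle, and it is precisely the point where the lack of compatible metrics bites. In the closed-manifold setting of \cite{tseng} one chooses a metric compatible with $\omega$, which diagonalizes the symbol computations; but a transverse symplectic structure on a Riemannian foliation need not admit a bundle-like metric compatible with $\omega$. The hard part is therefore to verify that the fourth-order symbol stays invertible in the transverse directions for an \emph{arbitrary}, possibly incompatible, transverse metric, and to confirm that the Kamber--Tondeur machinery---originally designed for the basic de Rham Laplacian---carries over to these fourth-order Bott--Chern-type operators without any compatibility assumption.
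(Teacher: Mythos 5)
Your architecture is exactly the paper's: the same fourth-order self-adjoint operator (your last two summands equal $(d^{\Lambda})^{*}dd^{*}d^{\Lambda}$ and $d^{*}d^{\Lambda}(d^{\Lambda})^{*}d$, so your $\Delta_{d+d^{\Lambda}}$ coincides with the paper's $D_{{\rm d}+{\rm d}^{\Lambda}}$), the same identification of its kernel with $H^{\bullet}_{{\rm d}+{\rm d}^{\Lambda}}(M\slash\mathcal{F})$, and the same appeal to El Kacimi-Alaoui's transversally elliptic theory. The gap is that you stop at what you yourself call the decisive step --- transversal ellipticity --- and, moreover, you set yourself the wrong task there. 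The worry that ``a transverse symplectic structure on a Riemannian foliation need not admit a bundle-like metric compatible with $\omega$'' is unfounded: given a bundle-like metric $g$ and the basic form $\omega$, write $\omega(X,Y)=g(AX,Y)$ on the normal bundle and set $J=A\big(A^{\rm t}A\big)^{-1/2}$; this polar decomposition is canonical in $(g,\omega)$, hence holonomy-equivariant, so $J$ and the compatible metric $\omega(\cdot,J\cdot)$ are again holonomy-invariant and the foliation stays Riemannian. (The genuine failure of compatible transverse metrics, flagged in the introduction, concerns \emph{non-Riemannian} transversally symplectic foliations such as Example~\ref{przy}, where there is no bundle-like metric to begin with.) The paper opens its proof with precisely this ``without loss of generality we can have a compatible triple'' step, keeping $\omega$ fixed and changing only $g$.

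Once the triple is compatible you should not attempt a symbol computation for an arbitrary metric. One gets $d^{\Lambda}=({\rm d}^{c})^{*}$, and the adjoints on basic forms satisfy ${\rm d}^{*}=\pm\star_{m}{\rm d}-\kappa\wedge\star_{m}$ and ${\rm d}^{\Lambda}=\pm(\star_{m}\partial-\star_{m}\bar{\partial}-\kappa_{1}\wedge\star_{m}+\kappa_{2}\wedge\star_{m})$, where the mean-curvature terms are zeroth order and therefore invisible to the principal symbol. The symbol of the fourth-order operator is then literally the one computed by Tseng and Yau, and its invertibility in the transverse directions follows from their argument; the Kamber--Tondeur/El Kacimi-Alaoui machinery applies to transversally elliptic operators of any order preserving basic forms, so no further adaptation is needed on that side. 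Filling in these two points turns your sketch into the paper's proof.
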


The relevant cohomologies are due to Tseng and Yau and will be defined later. Our initial feeling had been that the metric condition is superfluous. In the proof presented in Section \ref{wreszciedowod}, the Riemannian structure is indeed used to establish elliptic theory and to give some compactness constraints on the objects considered, but the symplectic cohomologies seem to be more involved with the algebraic structure of a~certain $\mathfrak{sl}(2,\mathbb{R})$-representation that does not depend on compactness of the space (cf.~\cite{yan} and \cite[Section~3.2.1]{tseng}). However, in \cite{pawelija} we recently gave an example of a very simple non-Riemannian transversally symplectic foliation, which shows that there is no possibility to extend Theorem~\ref{main} to all transversally symplectic foliations, or indeed to any other interesting broad class. We include it in the last section, in Example~\ref{przy}.

\looseness=-1 Having never developed a taste for elliptic theory, we initially tried to apply Molino's construction to the symplectic case. As will be explained in Section \ref{bundle}, it turns out that the appropriate spectral sequence comparison needs not work for the symplectic cohomologies but the relevant fiber bundle can be (somewhat surprisingly) defined. Thus we stumbled upon a construction that seems to be of independent interest. We will define a 2-form on the symplectic group as close to an invariant symplectic form as it can possibly get, and proceed to define a ``lifting'' similar to the canonical lifting of Molino. The need to use inverted commas will be explained in Remarks~\ref{dodac} and~\ref{dodacdwa}. We hope that the algebraic discussion of $\mathfrak{sp}(2n,\mathbb{R})$ will be appealing as well.

We also hope that the reader will accept (or forgive us) our exposition: we will describe withouth details the Riemannian prototype before stating our results in the symplectic setting. We feel it is appropriate to highlight in this way the differences in the mechanisms behind the metric and symplectic canonical lifts and Hodge theories.

The paper ends with the proof of Theorem \ref{main} and several remarks and examples.

\section{Preliminaries} \label{preliminaries}

We fix notation and recall some of the results about transverse structures and symplectic cohomologies.

First of all, in this paper we often switch between ``foliated'' and ``non-foliated'' cases. Much of the foliated constructions and examples is treated briefly as we try to focus on the topics interesting to the broader audience. At the same time, we include all the relevant foliated definitions and properties for the convenience of a non-foliated reader.

While we mostly work with a compact symplectic manifold $(M,\omega)$, we also distinguish ``non-compact'' and ``compact'' cases. The former pertains to the frame bundles with the non-compact fiber ${\rm Sp}(2n,\mathbb{R})$, the latter~-- to the ${\rm U}(n)$- or ${\rm SO}(2n)$-frame bundles. Throughout we write $\operatorname{Sp} M$, $\operatorname{U} M$, and $\operatorname{SO} M$ for these bundles, respectively. Only in Example~\ref{disc} the initial symplectic manifold will be non-compact.
\begin{Definition} \label{fol}
A \emph{codimension} $n$ \emph{foliation} $\mathcal{F}$ on a smooth manifold~$M$ is given by the following data:
\begin{itemize}\itemsep=0pt
\item an open cover $\mathcal{U}:=\{U_i\}_{i\in I}$ of $M$;
\item a $n$-dimensional smooth manifold $T_0$;
\item for each $U_i\in\mathcal{U}$ a submersion $f_i\colon U_i\longrightarrow T_0$ with connected fibers called \emph{plaques};
\item for each intersection $U_i\cap U_j\neq\varnothing$ a local diffeomorphism $\gamma_{ij}$ of $T_0$ such that $f_j=\gamma_{ij}\circ f_i$.
\end{itemize}
The last condition ensures that the plaques glue nicely to form a partition of $M$ by submanifolds of codimension $n$ called the \emph{leaves} of~$\mathcal{F}$. The dimension of these submanifolds is called the \emph{dimension} of the foliation.
\end{Definition}

We call $T=\coprod\limits_{U_i\in\mathcal{U}}f_i(U_i)$ the \emph{transverse manifold} of $\mathcal{F}$. The local diffeomorphisms $\gamma_{ij}$ generate a pseudogroup $\Gamma$ of transformations of~$T$ (called the \emph{holonomy pseudogroup}). The space of leaves $\bigslant{M}{\mathcal{F}}$ of the foliation $\mathcal{F}$ can be identified with $\bigslant{T}{\Gamma}$. We note that neither $T$ nor $\bigslant{T}{\Gamma}$ need to be compact, even if~$M$ is.
\begin{Definition}
A smooth form $\omega$ on $M$ is called \emph{basic} if for any vector field $X$ tangent to the leaves of $\mathcal{F}$ we have
\begin{gather*}
i_X\omega= i_X{\rm d}\omega= 0.
\end{gather*}
Basic forms are in one-to-one correspondence with $\Gamma$-invariant smooth forms on $T$.
\end{Definition}

It is clear that $d\omega$ is basic for any basic form $\omega$. Hence the set of basic forms of $\mathcal{F}$, $\Omega^{\bullet}(M\slash\mathcal{F})$, is a subcomplex of the de Rham complex of~$M$. We define the \emph{basic cohomology} (or sometimes \emph{basic de Rham cohomology} if other cohomologies of basic forms will be in play) of $\mathcal{F}$ to be the cohomology of this subcomplex and denote it by $H^{\bullet}(M\slash\mathcal{F})$.

We note that every manifold is foliated by points (with codimension equal to the dimension of the manifold) and then all forms are basic and the basic cohomology becomes ordinary cohomology. This remark applies to any structure below, thus any non-foliated case can be viewed as the special instance of foliated case.

We will adapt the usual Hodge star operator to the foliated situation. We will define it to preserve the basic forms. Consider a \emph{Riemannian foliation} $(M,\mathcal{F},g)$, i.e., a manifold endowed with a Riemannian metric $g$ for which $\mathcal{L}_Xg|_{T\mathcal{F}^{\bot}}=0$ if $X$ is tangent to the foliation~$\mathcal{F}$. Clearly, this defines a $\Gamma$-invariant metric on the transverse manifold. The volume form associated with~$g$ is of the form $\text{dvol} =\theta\wedge \chi$, where $\theta$ is a basic non-degenerate $n$-form (a~$\Gamma$-invariant volume form on~$T$). We can define the (metric) star pointwise: locally, for a non-vanishing basic $k$-form~$A$, $\star_{m} A$ is the unique basic $(n-k)$-form such that in each point~$x$
\begin{gather*}
(A\wedge\star_{m} A)_x=||A||^2_x\theta_x
\end{gather*}
(with the usual norm $||\cdot||$ induced pointwise by $g$) and we extend this definition invoking \smash{$\mathcal{C}^{\infty}(M\slash \mathcal{F})$}-linearity. We can then define a scalar product on $\Omega^{\bullet}(M\slash\mathcal{F})$ by
\begin{gather*}
\langle A,B\rangle = \int_{M} A\wedge \star_{m} B\wedge \chi
\end{gather*}
using again the associated volume to integrate.

From now on, we will assume that our foliation admits a closed basic 2-form of maximal rank, $\omega$. It corresponds to a $\Gamma$-invariant symplectic structure on~$T$ and we call it a \emph{transverse symplectic structure} on $\mathcal{F}$. Observe that the codimension must be even,~$2n$. In parallel with the Riemmanian structure~$\omega$ defines a Hodge star operator, but (due to its antisymmetry) in a~slightly different manner. Following \cite[Section~2.1]{brylinski}, there is a non-degenerate pairing of vector fields
\begin{gather*}
X, Y \mapsto \omega(X,Y)
\end{gather*}
that induces a non-degenerate pairing of $k$-forms
\begin{gather*}
\alpha, \beta \mapsto G(\alpha,\beta)
\end{gather*}
that in turn enables us to define an isomorphism
\begin{gather*}
\star_{s}\colon \ \Omega^{k}(T)\longrightarrow \Omega^{2n-k}(T)
\end{gather*}
by the following condition: if $\alpha$ and $\beta$ are $k$-forms, then
\begin{gather*}
\alpha\wedge \star_s\beta=G(\alpha,\beta)\omega^n.
\end{gather*}
It can be easily seen that $\Gamma$-invariance of $\omega$ implies that $\star_s$ preserves $\Gamma$-invariant forms, and so do the operators described in the following definition.
\begin{Definition}
The operators
\begin{gather*}
{\rm d}^{\Lambda}:=\star_s {\rm d} \star_s, \qquad {\rm d}+{\rm d}^{\Lambda},\qquad {\rm d}{\rm d}^{\Lambda}
\end{gather*}
preserve the basic forms. It is well known that ${\rm d}{\rm d}^{\Lambda}+{\rm d}^{\Lambda}{\rm d}=0$ and thus we can define the \emph{basic symplectic cohomologies}
\begin{gather*}
H^{\bullet}_{{\rm d}+{\rm d}^{\Lambda}}(M\slash\mathcal{F}):=\frac{\operatorname{Ker}({\rm d}+{\rm d}^{\Lambda})\cap \Omega^{\bullet}(M\slash\mathcal{F})}{\operatorname{Im}({\rm d}{\rm d}^{\Lambda})\cap \Omega^{\bullet}(M\slash\mathcal{F})},\\
H^{\bullet}_{{\rm d}{\rm d}^{\Lambda}}(M\slash\mathcal{F}):=\frac{\operatorname{Ker}({\rm d}{\rm d}^{\Lambda})\cap\Omega^{\bullet}(M\slash\mathcal{F})}{\operatorname{Im}({\rm d})+\operatorname{Im}({\rm d}^{\Lambda})\cap \Omega^{\bullet}(M\slash\mathcal{F})}.
\end{gather*}
We call ${\rm d}^{\Lambda}$ a \emph{codifferential} and sometimes refer to $H^{\bullet}_{{\rm d}+{\rm d}^{\Lambda}}(M\slash\mathcal{F})$ as to \emph{$({\rm d}+{\rm d}^{\Lambda})$-cohomology}. We note that the codifferential is of degree~$-1$.
\end{Definition}

We refer the interested reader to \cite{tseng,yan}, and \cite{baczekija,pawelija} for a more in-depth treatment of the topic in the non-foliated and foliated cases, respectively.

We point out that $H^{\bullet}_{{\rm d}+{\rm d}^{\Lambda}}(M\slash\mathcal{F})$ and $H^{\bullet}_{{\rm d}{\rm d}^{\Lambda}}(M\slash\mathcal{F})$ are both isomorphic to the basic de Rham cohomology for transversally K\"ahler foliations by the arguments of~\cite{pawel} or~\cite{baczekija}. Thus, as in the non-foliated case, these symplectic cohomologies measure how far from being K\"ahler the symplectic structure is.

We want to establish Theorem~\ref{main} for these symplectic cohomologies and we will do so in the last chapter, combining~\cite{tseng} with the elliptic methods from~\cite{azizeliptyczny}. First, however, we will focus on the Molino's construction mentioned above.

\section{``Canonical lift'' of a transversally symplectic foliation} \label{bundle}

We would like to try to repeat the following construction due to El Kacimi-Alaoui, Hector, and Sergiescu.
\begin{Theorem}[\cite{aziztopologiczny}] \label{aziz}
If $(M,\mathcal{F},g)$ is a Riemannian foliation, then
\begin{itemize}\itemsep=0pt
\item there exists a \emph{canonical lifting} $($cf.~{\rm \cite[Proposition~2.4]{molino})} to a foliation $\big({\rm SO}_{\mathcal{F}}M,\mathcal{F}^{{\rm SO}}\big)$ of the same dimension, where ${\rm SO}_{\mathcal{F}}M$ is the bundle of orthonormal frames transverse to $\mathcal{F}$ over $M$; its projection becomes a foliated map; the foliation $\mathcal{F}^{{\rm SO}}$ is again Riemannian;
\item closures of leaves of $\mathcal{F}^{{\rm SO}}$ are the fibers of a submersion from ${\rm SO}_{\mathcal{F}}M$ to a compact mani\-fold~$W$;
\item the basic cohomology of $\big({\rm SO}_{\mathcal{F}}M,\mathcal{F}^{{\rm SO}}\big)$ is isomorphic to the tensor product of de Rham cohomology of $W$ and a certain Lie algebra, hence it is finite dimensional;
\item the basic cohomology of $(M,\mathcal{F})$ sits naturally on a second page of a spectral sequence converging to ${\rm SO}(n)$-invariant basic cohomology of $\big({\rm SO}_{\mathcal{F}}M,\mathcal{F}^{{\rm SO}}\big)$; since ${\rm SO}(n)$ is compact and connected, this is isomorphic to $H^{\bullet}\big({\rm SO}_{\mathcal{F}}M,\mathcal{F}^{{\rm SO}}\big)$, hence finite dimensional; it can be then shown that the second page of the sequence is finite dimensional as well, establishing the finite dimension of $H^{\bullet}(M\slash\mathcal{F})$.
\end{itemize}
\end{Theorem}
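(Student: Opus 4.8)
The plan is to carry out Molino's program in four stages: construct the lifted foliation, recognise it as transversally parallelizable, invoke the structure theory of such foliations, and finally descend along the compact fibre ${\rm SO}(n)$. First I would build the canonical lift. The transverse metric determines a unique basic metric connection on the normal bundle of $\mathcal{F}$ (the transverse Levi-Civita connection), equivalently a $\Gamma$-invariant connection on $T$. Its horizontal distribution lifts vectors tangent to the leaves to ${\rm SO}_{\mathcal{F}}M$, and this horizontal lift of $T\mathcal{F}$ is integrable and tangent to the foliation $\mathcal{F}^{{\rm SO}}$; its leaves project diffeomorphically onto the leaves of $\mathcal{F}$, so $\dim \mathcal{F}^{{\rm SO}}=\dim \mathcal{F}$ and the projection is foliated. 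The point of passing to frames is that the transverse geometry becomes globally rigid: the tautological $\mathbb{R}^n$-valued solder form together with the $\mathfrak{so}(n)$-valued connection form furnish a global, holonomy-invariant coframe transverse to $\mathcal{F}^{{\rm SO}}$. Hence $\mathcal{F}^{{\rm SO}}$ is transversally parallelizable, and since $M$ is compact so is ${\rm SO}_{\mathcal{F}}M$. This yields the first item.

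The heart of the matter, and the step I expect to be the main obstacle, is Molino's structure theorem for transversally parallelizable foliations on a compact manifold. One proves that all leaf closures are mutually diffeomorphic and are the fibres of a locally trivial submersion ${\rm SO}_{\mathcal{F}}M\to W$ onto a compact manifold $W$, and that $\mathcal{F}^{{\rm SO}}$ restricted to a closure is a Lie foliation with dense leaves and a fixed structural Lie algebra $\mathfrak{g}$. The delicate points are the completeness of the transverse parallelism (supplied by compactness) and the analysis of the commuting sheaf of local transverse Killing fields whose global sections integrate to the $\mathfrak{g}$-action along the closures; I would cite this as the external input of El Kacimi--Alaoui, Hector, Sergiescu and Molino rather than reprove it. Granting it, the second item is immediate.

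For the third item I would compute the basic complex of $\mathcal{F}^{{\rm SO}}$ through the basic fibration ${\rm SO}_{\mathcal{F}}M\to W$. Basic forms invariant along the leaf closures descend to $\Omega^\bullet(W)$, while the variation within a single closure is governed by the Chevalley--Eilenberg complex of $\mathfrak{g}$; the associated Leray-type spectral sequence degenerates (a Leray--Hirsch argument, the fibration being cohomologically untwisted), giving
\begin{gather*}
H^\bullet\big({\rm SO}_{\mathcal{F}}M\big/\mathcal{F}^{{\rm SO}}\big)\cong H^\bullet(W)\otimes H^\bullet(\mathfrak{g}).
\end{gather*}
As $W$ is a compact manifold and $\mathfrak{g}$ is finite-dimensional, both tensor factors are finite-dimensional, and hence so is the left-hand side.

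Finally, to descend to $M$ I would exploit the foliated principal bundle ${\rm SO}(n)\to {\rm SO}_{\mathcal{F}}M\to M$. Filtering the basic complex of $\mathcal{F}^{{\rm SO}}$ by vertical degree yields a spectral sequence with
\begin{gather*}
E_2^{p,q}=H^p(M\slash\mathcal{F})\otimes H^q({\rm SO}(n))
\end{gather*}
converging to the ${\rm SO}(n)$-invariant basic cohomology of the lift; since ${\rm SO}(n)$ is compact and connected, averaging over the group identifies this invariant cohomology with the full $H^\bullet\big({\rm SO}_{\mathcal{F}}M\big/\mathcal{F}^{{\rm SO}}\big)$, just shown to be finite-dimensional. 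Finiteness of $H^\bullet(M\slash\mathcal{F})$ then follows by induction on the base degree $p$: differentials landing in the bottom row $E_r^{p,0}=H^p(M\slash\mathcal{F})$ originate from columns of strictly smaller base degree, so if every $H^{<p}(M\slash\mathcal{F})$ is finite-dimensional then only a finite-dimensional subspace of $E_2^{p,0}$ can be killed, forcing $E_\infty^{p,0}$ --- a subquotient of the finite-dimensional abutment --- to be finite-dimensional, whence $H^p(M\slash\mathcal{F})$ is finite-dimensional. This establishes the last item and completes the proof.
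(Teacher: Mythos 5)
The paper states this theorem only as a recalled result, with no proof of its own beyond the citations to El Kacimi-Alaoui--Hector--Sergiescu and Molino, and your four-stage outline (lift via the transverse Levi-Civita connection and solder form, transverse parallelizability, Molino's structure theorem and the basic fibration, then the spectral sequence of the ${\rm SO}(n)$-principal foliated bundle with the averaging identification and induction on base degree) is precisely the argument of those references, so it takes essentially the same approach. One minor correction: leaves of $\mathcal{F}^{{\rm SO}}$ project onto leaves of $\mathcal{F}$ as covering maps (the linearized-holonomy coverings), not diffeomorphisms in general, though nothing in your argument uses the stronger claim.
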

There are several problems to be encountered in the transversally symplectic case, the first of which is to construct the canonical lifting. In the Riemannian case, it is locally a product Riemannian foliation $\big(U,\mathcal{F}|_U,g|_U\big)\times \big({\rm SO}(n),\ast,m_{{\rm SO}(n)}\big)$ with $m_{{\rm SO}(n)}$ being the unique bi-invariant metric on ${\rm SO}(n)$. In the symplectic case, we need to find an appropriate structure on the fiber ${\rm Sp}(2n,\mathbb{R})$ and before that, to refer the reader again to Remarks~\ref{dodac} and~\ref{dodacdwa} to emphasise that our structure will not be a~local product in the same way.

We now turn to the symplectic geometry of ${\rm Sp}(2n,\mathbb{R})$.

\subsection[2-forms on $\mathfrak{sp}(2n,\mathbb{R})$]{2-forms on $\boldsymbol{\mathfrak{sp}(2n,\mathbb{R})}$}

We promptly make some points. First of all, for even $n$ the symplectic group ${\rm Sp}(2n,\mathbb{R})$ admits a symplectic structure as a non-compact almost complex manifold by a well-known theorem of Gromov. We are however interested in left-invariant symplectic structures. Second, neither ${\rm Sp}(2n,\mathbb{R})$ admits such a structure (by linear algebra below) nor do the two other groups of interest to us, ${\rm U}(n)$ and ${\rm SO}(2n)$ (by their compactness and cohomology). We need to get rid of the inevitable degeneracies of a left-invariant 2-form by taking appropriate quotients. This also solves the problem of possible odd dimension of any of these groups.

Third, we need to stress that the considerations below apply to any semi-simple Lie group, but we refrain from expressing the construction in its maximal generality, instead keeping to the groups arising naturally in the context of symplectic manifolds and foliations.

We fix the notation. Consider the $(2n^2+n)$-dimensional matrix group
\begin{gather*}
{\rm Sp}(2n,\mathbb{R})=\big\{X\in M(2n\times 2n,\mathbb{R})\,|\,X^{\rm t}JX=J\big\}
\end{gather*}
(where $J$ is the standard complex multiplication matrix $\left[\begin{smallmatrix} 0&-I\\ I&0 \end{smallmatrix}\right]$), and its algebra
\begin{gather*}
\mathfrak{sp}(2n,\mathbb{R})=\big\{X\in M(2n\times 2n,\mathbb{R})\,|\,JX=-X^{\rm t}J\big\}
\end{gather*}
and note that the last condition is equivalent to
\begin{gather}
\left\{X=\left[
\begin{matrix}
A & B\\
C & -A^{\rm t}
\end{matrix}
\right]\,|\,A,B,C,D\in M(n\times n,\mathbb{R}),\,B=B^{\rm t},\,C=C^{\rm t}\right\}, \label{(ast)}
\end{gather}
where $L^{\rm t}$ is the transpose of $L$. We begin with the description of the left-invariant closed 2-forms on ${\rm Sp}(2n,\mathbb{R})$ or, equivalently, closed forms on its Lie algebra.
\begin{Theorem}
A closed $2$-form on $\mathfrak{sp}(2n,\mathbb{R})$ has generically rank $2n^2$, i.e., its kernel is of dimension $n$. Moreover, this kernel is generically a commutative Lie algebra.
\end{Theorem}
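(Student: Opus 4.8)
The plan is to use the semisimplicity of $\mathfrak{sp}(2n,\mathbb{R})$ to reduce the entire statement to the theory of regular elements and centralizers. A $2$-form on the Lie algebra is an element of $\Lambda^2\mathfrak{g}^*$, and ``closed'' refers to the Chevalley--Eilenberg differential, so that closed $2$-forms are exactly the $2$-cocycles. Since $\mathfrak{g}=\mathfrak{sp}(2n,\mathbb{R})$ is semisimple, Whitehead's second lemma gives $H^2(\mathfrak{g};\mathbb{R})=0$; hence every closed $2$-form is a coboundary, $\omega=\mathrm{d}\xi$ for some $\xi\in\mathfrak{g}^*$, which on the Lie algebra means $\omega(X,Y)=-\xi([X,Y])$.

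Next I would identify $\xi$ with a vector via the Killing form $B$, which is non-degenerate by semisimplicity: write $\xi=B(T,\,\cdot\,)$ for a unique $T\in\mathfrak{g}$. Invariance of $B$ then yields $\omega(X,Y)=B(X,[T,Y])$, so that the operator $\mathrm{ad}_T$ represents $\omega$ through $B$. Because $\mathrm{ad}_T$ is $B$-skew, the radical of $\omega$ coincides with $\ker(\mathrm{ad}_T)=\mathfrak{z}(T)$, the centralizer of $T$. This already shows that the kernel of $\omega$ is a Lie subalgebra, and that the rank of $\omega$ equals $\dim\mathfrak{g}-\dim\mathfrak{z}(T)=(2n^2+n)-\dim\mathfrak{z}(T)$.

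It remains to control $\dim\mathfrak{z}(T)$ for generic $T$. The rank of $\mathfrak{sp}(2n,\mathbb{R})$ (type $C_n$) is $n$, and the regular elements -- those whose centralizer attains the minimal dimension $n$ -- form the complement of the vanishing locus of the lowest nonzero coefficient of the characteristic polynomial of $\mathrm{ad}_T$, hence a Zariski-open, dense set. A generic $T$ is in fact regular semisimple, so $\mathfrak{z}(T)$ is a Cartan subalgebra: abelian, of dimension $n$. This gives simultaneously the generic kernel dimension $n$, the generic rank $(2n^2+n)-n=2n^2$, and the commutativity of the kernel.

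I expect the only delicate point to be the genericity step over $\mathbb{R}$: one must verify that the regular set is nonempty and dense, which can be done concretely by taking a generic diagonal matrix in a split Cartan and reading off $\mathrm{ad}_T$ from the explicit block form given above, and one must note that although $\mathrm{Ad}$-conjugacy classes of real Cartan subalgebras are not unique, they all share dimension $n$ and are abelian, so the dimension count and the commutativity conclusion are unaffected. Everything else is formal once Whitehead's lemma and the Killing-form identification are in hand.
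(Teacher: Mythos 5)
Your argument is correct and follows essentially the same route as the paper: Whitehead's lemma ($H^1=H^2=0$) to write the closed $2$-form as $-\xi([X,Y])$, the Killing form to identify $\xi$ with an element $T$ whose centralizer is the kernel, and genericity of regular (semisimple) elements to get an abelian Cartan subalgebra of dimension $n$. Your extra remarks on the Zariski-openness of the regular set and on the non-uniqueness of real Cartan conjugacy classes only make explicit points the paper leaves implicit.
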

In particular, there are no forms of higher rank.
\begin{proof}
Choose a closed 2-form $\omega\in\bigwedge^2\mathfrak{sp}(2n,\mathbb{R})^*$. The cohomology of the algebra $\mathfrak{sp}(2n,\mathbb{R})$ is well known, $H^2(\mathfrak{sp}(2n,\mathbb{R})) = H^1(\mathfrak{sp}(2n,\mathbb{R})) = 0$, therefore there exists a unique 1-form $\theta$ such that for all vectors $x$~and~$y$, $\omega(x,y)={\rm d}\theta(x,y)=-\theta([x,y])$. The Killing form $B$ is non-degenerate, and so $\theta(x)=B(a,x)$ for some $a\in\mathfrak{sp}(2n,\mathbb{R})$. The Killing form is invariant, thus $\omega(x,y)=-\theta([x,y])=-B(a,[x,y])=-B([a,x],y)$. We see that the kernel of $\omega$, $\{x\,|\,\omega(x,\cdot)\equiv 0\}$ turns out to be the centralizer of $a$, $\{x\,|\,[a,x]=0\}=:\mathfrak{z}_a$.
We proceed to compute the generic dimension. The set of regular elements (of minimal centralizers) in $\mathfrak{sp}(2n,\mathbb{R})$ consists of matrices diagonalizable over $\mathbb{C}$ with $2n$ distinct complex eigenvalues. A centralizer $\mathfrak{z}_A$ of such an~$A$ is a~$n$-dimensional Cartan algebra of matrices diagonalizable in the same basis. Regularity itself is generic in the Zariski sense, thus also in the usual topology of $\mathbb{R}^{2n^2+n}$.
\end{proof}

For a regular element $A$, we can now define a ``pre-symplectic'' form $\omega_A$ by
\begin{gather*}
\omega_A(x,y):=B(A,[x,y]).
\end{gather*}
Observe that the centralizer of $A$ above is a maximal abelian subalgebra of $\mathfrak{sp}(2n,\mathbb{R})$, therefore a subgroup it is tangent to, $Z_A=\exp \mathfrak{z}_A$, is closed. We will content ourselves with the following theorem as a substitute for the existence of a symplectic structure on ${\rm Sp}(2n,\mathbb{R})$.
\begin{Theorem}\label{struktur}
For a regular element $A\in\mathfrak{sp}(2n,\mathbb{R})$, the homogeneous space $\bigslant{{\rm Sp}(2n,\mathbb{R})}{Z_A}$ is a symplectic non-compact manifold with the symplectic form $\omega^{Z}_{A}$ descending from $\omega_A$ described above.
\end{Theorem}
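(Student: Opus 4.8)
The plan is to produce $\omega^Z_A$ by descending the left-invariant $2$-form $\omega_A$ along the projection $\pi\colon G\to\bigslant{G}{Z_A}$, where I abbreviate $G={\rm Sp}(2n,\mathbb{R})$ and regard $\bigslant{G}{Z_A}$ as the space of right cosets $gZ_A$. The centralizer $\mathfrak{z}_A$ is a subalgebra (if $[A,x]=[A,y]=0$ then $[A,[x,y]]=0$ by the Jacobi identity), so $Z_A=\exp\mathfrak{z}_A$ is the connected subgroup integrating it; it is closed by the remark preceding the statement, hence $\pi$ is a submersion with connected fibres and $\bigslant{G}{Z_A}$ is a smooth manifold of dimension $(2n^2+n)-n=2n^2$. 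I extend $\omega_A(x,y)=B(A,[x,y])$ to a left-invariant form on $G$; it is closed, being (up to sign) the differential of the left-invariant $1$-form $x\mapsto B(A,x)$ exactly as in the previous proof.

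First I would identify the vertical bundle with the kernel of $\omega_A$. The fibre through $g$ is $\{g\exp(t\xi)\colon \xi\in\mathfrak{z}_A\}$, so the vertical space is $V_g=({\rm d}L_g)_e\mathfrak{z}_A$. On the other hand, left-invariance gives $\ker(\omega_A)_g=({\rm d}L_g)_e\ker(\omega_A)_e$, and $\ker(\omega_A)_e=\mathfrak{z}_A$ by the preceding theorem; therefore $\ker(\omega_A)_g=V_g$ at every point. This single coincidence is the heart of the construction, and it is available precisely because $Z_A$ was chosen as the subgroup integrating the kernel subalgebra of $\omega_A$.

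Next I would invoke the standard criterion for a form on the total space of a submersion with connected fibres to be a pullback: $\omega_A=\pi^*\omega^Z_A$ for a unique $\omega^Z_A$ if and only if $i_V\omega_A=0$ and $i_V{\rm d}\omega_A=0$ for every vertical $V$. The first condition is the inclusion $V_g\subseteq\ker(\omega_A)_g$ just established, and the second is automatic since ${\rm d}\omega_A=0$ (equivalently $\mathcal{L}_V\omega_A=i_V{\rm d}\omega_A+{\rm d}\,i_V\omega_A=0$). Closedness descends as well, since $\pi^*{\rm d}\omega^Z_A={\rm d}\omega_A=0$ and $\pi^*$ is injective on forms; and non-degeneracy is immediate from $\ker(\omega_A)_g=V_g$, because the bilinear form that $(\omega_A)_g$ induces on $T_gG/V_g\cong T_{\pi(g)}\big(\bigslant{G}{Z_A}\big)$ has trivial kernel. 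At this point $\omega^Z_A$ is a symplectic form.

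The only genuinely delicate point, and the step I expect to be the main nuisance, is non-compactness, since the symplectic content above is essentially forced by the earlier computation of $\ker\omega_A$. Here I would use the Riemannian symmetric space $X=\bigslant{G}{K}$ with $K={\rm U}(n)$ a maximal compact subgroup: $X$ is diffeomorphic to $\mathbb{R}^{\dim\mathfrak{p}}$ with $\dim\mathfrak{p}=n^2+n$, hence contractible and non-compact. In the representative case in which $A$ has purely imaginary eigenvalues, $Z_A$ is a compact torus sitting inside $K$, and $\bigslant{G}{Z_A}$ fibres over $X$ with compact fibre $\bigslant{K}{Z_A}$, so it is non-compact. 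The remaining conjugacy types of the Cartan subgroup $Z_A$ (split or mixed, according to the eigenvalue pattern of the regular element $A$) are handled the same way; in every case a Cartan subgroup is far from cocompact in a non-compact semisimple group, so $\bigslant{G}{Z_A}$ is non-compact, completing the proof.
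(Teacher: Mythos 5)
Your proof is correct and follows the route the paper leaves implicit: Theorem~\ref{struktur} is stated there without a separate proof, as an immediate consequence of the preceding computation that the kernel of the left-invariant closed form $\omega_A$ is exactly $\mathfrak{z}_A$, i.e., the tangent space to the fibres of ${\rm Sp}(2n,\mathbb{R})\to\bigslant{{\rm Sp}(2n,\mathbb{R})}{Z_A}$, so that $\omega_A$ is basic and descends to a closed non-degenerate form. The only soft spot is non-compactness for a non-elliptic Cartan subgroup $Z_A$, which you assert rather than argue; it is easily patched (for instance, $\bigslant{{\rm Sp}(2n,\mathbb{R})}{Z_A}$ finitely covers the adjoint orbit of the regular element $A$, which is unbounded in $\mathfrak{sp}(2n,\mathbb{R})$ since ${\rm ad}(H)$ acts on $A$ with a nonzero real-eigenvalue component for any hyperbolic $H\notin\mathfrak{z}_A$), and for the paper's preferred choice of $Z_A$ as the compact torus your fibration argument over $\bigslant{{\rm Sp}(2n,\mathbb{R})}{{\rm U}(n)}$ already suffices.
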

The notation is meant to indicate that the form $\omega^{G}_A$ is defined by element $A$ on the quotient by its centralizer $G$. We now discuss the choices made along the construction.
\begin{Remark}
First of all we note that the Cartan algebra in the non-compact case is not unique up to conjugation (cf.~\cite{sugiura}). There are however at least two natural choices of the Cartan subgroup, a conjugate copy of a compact torus $\mathbb{T}^n\subset {\rm U}(n)\subset {\rm Sp}(2n,\mathbb{R})$, or the group of the diagonal matrices in ${\rm Sp}(2n,\mathbb{R})$. The latter would carry over more of the algebraic topology of ${\rm Sp}(2n,\mathbb{R})$ to the quotient (since we divide out a contractible group), but we will actually use below the fact that the former kills off the fundamental group. We recall that the maximal torus in a non-compact Lie groups is still unique up to conjugation.
\end{Remark}
\begin{Remark}
We note that contrary to the compact case, Cartan algebras of ${\rm Sp}(2n,\mathbb{R})$ need not even have the minimal dimension among maximal abelian subalgebras. The example of Courter from~\cite{courter} can be fitted into the $A$-part of matrices in~\eqref{(ast)} giving a maximal abelian subalgebra of ${\rm Sp}(28,\mathbb{R})$ of dimension~12, not~14 (the classification from~\cite{masasp} is needed to see that it is indeed maximal). One would be tempted to use such an algebra to retain as much geometry of ${\rm Sp}(2n,\mathbb{R})$ as possible, by keeping the dimension of the quotient high. We prefer Cartan subalgebras for the reason explained above and also for their clear structure (worked out in~\cite{sugiura}, for example), which these smaller algebras lack. We refer the interested reader to the series of papers by Zassenhaus et al.\ \cite{masaupq,masaonc,tenodcourtera,masasp} for a glimpse of the surprising amount of problems in the study of maximal abelian subalgebras.
\end{Remark}
\begin{Remark} Theorem \ref{struktur} paints a slightly more general picture than that of usual compact flag manifolds, well-known to be K\"ahler. In the construction applied to the groups ${\rm SO}(n)$ and~${\rm U}(n)$ (with the inconsequential adjustment that $\theta$ in the proof is now not unique) $Z_A$ is always a~maximal torus and any two choices of this torus (of the subalgebra, of the regular element) therefore give diffeomorphic quotients. In the non-compact case we have less rigidity: consider the example of ${\rm Sp}(2,\mathbb{R})={\rm SL}(2,\mathbb{R})$. It can be divided by either of its (conjugacy classes of) one-dimensional Cartan subgroups: the compact (elliptic) 1-torus to get $\mathbb{R}^2$ or non-compact (hyperbolic) 1-parameter subgroup to get $\mathbb{S}^1\times \mathbb{R}$. We note that the third type of 1-parameter subgroup, non-Cartan and parabolic, would work in this case as well.
\end{Remark}
\begin{Remark} Observe however that even for two maximal tori $Z_A$ and $Z_{A'}$ the diffeomorphic quotients $\big(\bigslant{{\rm Sp}(2n,\mathbb{R})}{Z_A},\omega^{Z}_{A}\big)$ and $\big(\bigslant{{\rm Sp}(2n,\mathbb{R})}{Z_{A'}},\omega^{Z}_{A'}\big)$ may not be symplectomorphic if \smash{$A\neq A'$}, since the Killing form is non-degenerate. There does not seem to be a preferred choice of the element $A$ even in the presence of additional structure, e.g., Riemannian metric.
\end{Remark}

\subsection{Symplectic structures in the frame bundle}

We can now define a ``lifting'' of a symplectic structure to the symplectic frame bundle. We restrict our attention to a compact non-foliated symplectic manifold $(M,\omega)$, we will return to the foliated case in Remark \ref{sedno}. We construct the principal ${\rm Sp}(2n,\mathbb{R})$-bundle of symplectic frames
\begin{gather*}
{\rm Sp}(2n,\mathbb{R}) \to \operatorname{Sp} M \to M
\end{gather*}
with a chosen connection $\nabla$, understood as a choice of the horizontal direction. This bundle is acted upon fiberwise by a maximal torus $\mathbb{T}^n=Z_A$, for some regular $A\in\mathfrak{sl}(2n,\mathbb{R})$ that we choose and fix. The quotient by this action, the total space of the bundle
\begin{gather*}
\bigslant{{\rm Sp}(2n,\mathbb{R})}{\mathbb{T}^n} \to \bigslant{\operatorname{Sp} M}{\mathbb{T}^n} \to M
\end{gather*}
inherits, by the results of the previous section, a 2-form $p^*\omega+i_*\omega^{\mathbb{T}}_A$ with
\begin{gather*}
\iota_X i_*\omega^F_A=\begin{cases} 0 & X \ \text{horizontal},\\
\iota_X\omega^{\mathbb{T}}_A & X \ \text{vertical}\end{cases}
\end{gather*}
depending on the choice of the connection (which survives to the quotient), but well defined in the vertical direction. This form is not necessarily closed: it would be, provided the principal bundle was flat, or if the holonomy of the original connection was restricted to the torus we divided by; this is not always the case. We point out that the form can be defined for any bundle with a cocycle with values in the symplectomorphisms of the fiber, cf.~\cite{dusa}.

We proceed to enhance our form a little. We recall the construction from \cite{fat}. Consider the transitive symplectic action of the group ${\rm Sp}(2n,\mathbb{R})$ on $\big(\bigslant{{\rm Sp}(2n,\mathbb{R})}{\mathbb{T}^n},\omega^{\mathbb{T}}_A\big)$. Since the quotient is simply-connected (by elementary algebraic topology), it has a well defined moment map $\mu\colon \bigslant{{\rm Sp}(2n,\mathbb{R})}{\mathbb{T}^n}\longrightarrow \mathfrak{sp}(2n,\mathbb{R})^*$, and since $H^2(\mathfrak{sp}(2n,\mathbb{R}))=H^1(\mathfrak{sp}(2n,\mathbb{R}))=0$, this moment map is coadjoint-invariant. The connection's curvature form $\Omega$ survives dividing out the torus and this gives a 2-form $\Omega^{\mathbb{T}}$ on the quotient, with values in $\mathfrak{sl}(2n,\mathbb{R})$. By~\cite{fat} (and references therein), the 2-form $i_*\omega^F_A + \mu\Omega^{\mathbb{T}}$ is closed.

It may fail to be non-degenerate, but only in the horizontal direction. The idea in \cite{fat} tracing back to Thurston's paper~\cite{thurston} is to get rid of the possible degeneracies by adding $\lambda p^*\omega$ for sufficiently large~$\lambda$~-- the form $i_*\omega^F_A + \mu\Omega^{\mathbb{T}} + \lambda p^*\omega$ would be now both closed and non-degenerate. In \cite{fat} this $\lambda$ can be easily chosen because the bundle is compact, which our bundle obviously fails to be. But the horizontal part of the form is appropriately bounded in our case as well: it is invariant by the ${\rm Sp}(2n,\mathbb{R})$ action and thus descends to the (compact) base manifold. The existence of $\lambda$ follows. We summarise this discussion in the following theorem and the remarks afterwards.
\begin{Theorem} \label{main2}
The manifold $\big(\bigslant{\operatorname{Sp} M}{\mathbb{T}^n},i_*\omega^{\mathbb{T}}_A + \mu\Omega^{\mathbb{T}} + \lambda p^*\omega\big)$ is symplectic, for some $\lambda\in\mathbb{R}$.
\end{Theorem}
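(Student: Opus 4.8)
The plan is to check the two properties that make a $2$-form symplectic — closedness and non-degeneracy — and to isolate the choice of $\lambda$ entirely in the second. Closedness I would dispose of immediately: by the result of~\cite{fat} quoted above the coupling form $i_*\omega^{\mathbb{T}}_A+\mu\Omega^{\mathbb{T}}$ is already closed, and $p^*\omega$ is closed because ${\rm d}\,p^*\omega=p^*{\rm d}\omega=0$; hence $i_*\omega^{\mathbb{T}}_A+\mu\Omega^{\mathbb{T}}+\lambda p^*\omega$ is closed for every $\lambda\in\mathbb{R}$. So the whole theorem reduces to producing one value of $\lambda$ for which the form is non-degenerate at every point of $\bigslant{\operatorname{Sp} M}{\mathbb{T}^n}$.

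To attack non-degeneracy I would use the splitting $T\big(\bigslant{\operatorname{Sp} M}{\mathbb{T}^n}\big)=V\oplus H$ of the tangent bundle into the vertical subbundle $V$ (tangent to the fibres $\bigslant{\operatorname{Sp}(2n,\mathbb{R})}{\mathbb{T}^n}$) and the horizontal subbundle $H$ determined by the connection $\nabla$, which descends to the quotient. In this splitting the three summands fall into complementary blocks. By its defining property together with antisymmetry, $i_*\omega^{\mathbb{T}}_A$ vanishes whenever one argument is horizontal, so it is carried by $V\times V$, where it restricts to the fibre form $\omega^{\mathbb{T}}_A$; on the other hand $\Omega^{\mathbb{T}}$ is horizontal (being a curvature), so $\mu\Omega^{\mathbb{T}}$ is carried by $H\times H$, and $p^*\omega$ likewise vanishes on vertical vectors. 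Consequently the mixed $V\times H$ block is zero and the form is block-diagonal, so it is non-degenerate at a point exactly when both diagonal blocks are. The vertical block is $\omega^{\mathbb{T}}_A$, non-degenerate by Theorem~\ref{struktur}; thus everything comes down to the horizontal block $\mu\Omega^{\mathbb{T}}\big|_{H}+\lambda\,p^*\omega\big|_{H}$, a family of $2$-forms on $H\cong T_{p(\cdot)}M$.

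Here I would run Thurston's trick: since $\omega$ is symplectic on the compact base, $p^*\omega\big|_{H}$ is non-degenerate, so once $\lambda$ is large enough the term $\lambda\,p^*\omega\big|_{H}$ dominates the perturbation $\mu\Omega^{\mathbb{T}}\big|_{H}$ and the horizontal block becomes non-degenerate. The hard part — and the only genuine deviation from the compact situation of~\cite{fat} — is that $\bigslant{\operatorname{Sp} M}{\mathbb{T}^n}$ is non-compact, so a uniform $\lambda$ cannot be extracted by a bare appeal to compactness. I expect this to be the main obstacle. The way around it is to observe that the horizontal block is built from $\operatorname{Sp}(2n,\mathbb{R})$-equivariant data: the connection and hence its curvature $\Omega$ transform by the adjoint representation, while the moment map $\mu$ is coadjoint-equivariant (this being exactly what the vanishing $H^1(\mathfrak{sp}(2n,\mathbb{R}))=H^2(\mathfrak{sp}(2n,\mathbb{R}))=0$ secures), so the pairing $\mu\Omega^{\mathbb{T}}$ restricted to $H$ is invariant under the fibrewise action. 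It therefore descends to the compact base $M$, where $p^*\omega\big|_H$ is uniformly non-degenerate; comparing the two over the compact $M$ yields a single $\lambda$ that works simultaneously on the whole total space, which finishes the proof.
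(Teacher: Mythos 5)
Your proposal is correct and follows essentially the same route as the paper: closedness via Weinstein's coupling form from~\cite{fat}, non-degeneracy via the vertical/horizontal block decomposition with Thurston's trick, and the non-compactness of the total space handled exactly as in the text, by observing that the horizontal block is invariant under the fibrewise $\operatorname{Sp}(2n,\mathbb{R})$-action (adjoint transformation of the curvature paired with the coadjoint-equivariant moment map) and therefore descends to the compact base, yielding a uniform~$\lambda$. Your write-up is in fact more explicit than the paper's about why the mixed $V\times H$ block vanishes, but the argument is the same.
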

Of course, we could have opted for small perturbation of $p^*\omega$ choosing the structure $\frac{1}{\lambda}i_*\omega^{\mathbb{T}}_A + \frac{1}{\lambda}\mu\Omega^{\mathbb{T}} + p^*\omega$, but this is inconsequential.
\begin{Remark}\label{dodac}
Observe that $i_*\omega^{\mathbb{T}}_A$ is closed iff the connection is flat (i.e., the original connection has its holonomy in the torus we divide by). For any two horizontal fields $X$ and $Y$, and vertical~$Z$ we have ${\rm d}i_*\omega^{\mathbb{T}}_A(X,Y,Z)=i_*\omega^{\mathbb{T}}_A([X,Y],Z)$ which is equivalent to $[X,Y]$ being horizontal by non-degeneracy in the vertical direction. This in particular implies that $\mu\Omega^{\mathbb{T}}$ is closed iff it is zero, while one might have expected ``$\mu\Omega^{\mathbb{T}}$ proportional to $p^*\omega$'' to be a~meaningful geometric condition.
\end{Remark}
\begin{Remark}\label{dodacdwa}
While the resulting symplectic form is the prescribed form $i_*\omega^{\mathbb{T}}_A$ along each fiber (as described in \cite{fat}) the local product structure is problematic. By the previous remark, the total space of a flat bundle is locally trivialized as a product of symplectic manifolds. We would like to state that these conditions are equivalent, however we see no obvious way to prove it. The problem is that Darboux coordinates in general cannot preserve neither the horizontal nor vertical directions, and we strongly suspect that especially in the non-compact setting there might appear a non-flat connection yielding a symplectic structure ${\rm Sp}(2n,\mathbb{R})$-symplectomorphic to a product. However, we were unable to procure such an example.
\end{Remark}
\begin{Remark}\label{main3}
The construction carries over to the compact case (yielding symplectic $\bigslant{\operatorname{U} M}{\mathbb{T}^n}$ or $\bigslant{\operatorname{SO} M}{\mathbb{T}^n}$). We note that the non-compact bundle gives a parallelizable manifold, while the compact ones may fail to do so (for example, $\bigslant{\operatorname{U} \mathbb{S}^2}{\mathbb{T}^1}=\mathbb{S}^2$).
\end{Remark}
\begin{Remark} \label{sedno} The construction applies to the foliated case as well. Consider a transversally symplectic foliation of codimension $2n$, $(M,\mathcal{F},\omega)$. There exist the canonically lifted transversally symplectic foliation
\begin{gather*}
\Big(\bigslant{\operatorname{Sp} M}{\mathbb{T}^n},\mathcal{F}^{\bigslant{{\rm Sp}}{\mathbb{T}}},i_*\omega^{\mathbb{T}}_A + \mu\Omega^{\mathbb{T}} + \lambda p^*\omega\Big).
\end{gather*}
Assuming the foliation was Riemannian we can reduce the frame bundle further and obtain two other lifted transversally symplectic foliations
\begin{gather*}
\Big(\bigslant{\operatorname{SO} M}{\mathbb{T}^n},\mathcal{F}^{\bigslant{{\rm SO}}{\mathbb{T}}},i_*\omega^{\mathbb{T}}_A + \mu\Omega^{\mathbb{T}} + \lambda p^*\omega\Big),\\
\Big(\bigslant{\operatorname{U} M}{\mathbb{T}^n},\mathcal{F}^{\bigslant{{\rm U}}{\mathbb{T}}},i_*\omega^{\mathbb{T}}_A + \mu\Omega^{\mathbb{T}} + \lambda p^*\omega\Big)
\end{gather*}
on the quotients of the bundles of transverse frames, with transversally symplectic structures defined in the similar fashion as in Theorem~\ref{main2}. The canonical lift of a foliation is described in detail in Proposition~2.4 and Chapter~5 of~\cite{molino}.
\end{Remark}

\subsection[Comparing symplectic cohomology of the base, fiber, and total space of a bundle]{Comparing symplectic cohomology of the base, fiber,\\ and total space of a bundle}

After the canonical lifting is constructed, the proof of Theorem~\ref{aziz} in~\cite{aziztopologiczny} proceeds to compare the basic cohomology of its base and its total space via the Leray--Serre spectral sequence. This seems not to carry over to the symplectic case for reasons that are interesting in their own right. We offer the following two observations, being yet unable to propose a coherent picture for symplectic cohomologies of products and bundles. As is well known, the Leray--Serre spectral sequence assumes the K\"{u}nneth formula for de Rham cohomology.
\begin{Example}\label{disc}
The K\"unneth formula fails for symplectic cohomology in general. For example
\begin{gather*}
H^{i}_{{\rm d}+{\rm d}^{\Lambda}}\big(\mathbb{R}^{2n}\big)=\begin{cases}0 & \text{for $i$ odd}, \\ \mathbb{R} & \text{for $i$ even}, \end{cases}
\qquad \text{and}\qquad
H^{i}_{{\rm d}{\rm d}^{\Lambda}}\big(\mathbb{R}^{2n}\big)=\begin{cases}\mathbb{R} & \text{for $i$ odd}, \\ 0 & \text{for $i$ even}. \end{cases}
\end{gather*}
\end{Example}
\begin{proof}
The space $\mathbb{R}^{2n}$ is endowed with the standard symplectic structure $\omega_0$. Consider a~$({\rm d}+{\rm d}^{\Lambda})$-cycle $x$ in $\Omega^k\big(\mathbb{R}^{2n}\big)$, i.e., ${\rm d}x={\rm d}^{\Lambda}x=0$ (we say it is both closed and \emph{coclosed}). Since $0={\rm d}x$, $x={\rm d}y_1$ by Poincar\'e lemma. Consider ${\rm d}^{\Lambda}y_1$. Since $0={\rm d}^{\Lambda}x={\rm d}^{\Lambda} {\rm d} y_1=-{\rm d} {\rm d}^{\Lambda} y_1$, there is $y_3$ such that ${\rm d}^{\Lambda}y_1={\rm d}y_3$. Proceeding inductively, we can construct a sequence $\{y_{2i-1}\}$ of elements in $\Omega^{k-2i+1}$ satisfying ${\rm d}y_{2i-3}=d^{\Lambda}y_{2i-1}$ and ${\rm d}y_1=x$. This sequence is not unique.

Suppose some $y_{2i-1}$ is coclosed, i.e., ${\rm d}^{\Lambda}y_{2i-1}=0$. Then there is an element $\alpha_i\in \Omega^{k-2i+2}\big(\mathbb{R}^{2n}\big)$ such that ${\rm d}^{\Lambda}\alpha_i=y_{2i-1}$~-- because the Poincar\'e lemma works for the operator ${\rm d}^{\Lambda}$ as well. Then the element $y'_{2i-3}:=y_{2i-3}-{\rm d} \alpha_i$ is coclosed and we still have ${\rm d}y'_{2i-3}={\rm d}^{\Lambda}y_{2i-5}$. Proceeding inductively, we can assume we have chosen a coclosed $y_1$, which then is also \emph{coexact}, $y_1={\rm d}^{\Lambda}z$. Observe that then $x={\rm d}{\rm d}^{\Lambda}z$ and thus represents 0 in $H^{\bullet}_{{\rm d}+{\rm d}^{\Lambda}}$.

It is easy to see that if $k$ is odd, then eventually this procedure will produce a coclosed $y_{k}\in\Omega^{0}\big(\mathbb{R}^{2n}\big)$. If $k$ is even, however, the last $y_{k-1}$ lives in $\Omega^{1}\big(\mathbb{R}^{2n}\big)$ and may be coclosed but need not be. We know however that ${\rm d}{\rm d}^{\Lambda}y_{k-1}=-{\rm d}^{\Lambda}dy_{k-1}=-{\rm d}^{\Lambda}{\rm d}^{\Lambda} y_{k-3}=0$, hence ${\rm d}^{\Lambda}y_{k-1}$ is a constant function $c_x$. It is easy to see that $[x]\mapsto c_x$ gives a monomorphism in even degrees $H^{2k}_{{\rm d}+{\rm d}^{\Lambda}}\big(\mathbb{R}^{2n}\big)\hookrightarrow \mathbb{R}$. We have a natural candidate to help us show that it is also an epimorphism. The form $\omega_0^k$ is easily seen to be a $({\rm d}+{\rm d}^{\Lambda})$-cycle. Observe that
\begin{gather*}
\omega_0^k = \sum_{i_1<\cdots<i_k}{\rm d}x_{i_1}\wedge {\rm d}y_{i_1}\wedge\cdots \wedge {\rm d}x_{i_k}\wedge {\rm d} y_{i_k} \\
\hphantom{\omega_0^k}{} ={\rm d}\sum_{i_1<\cdots<i_k}\sum_{j=1}^{k}\big(\big(x_{i_j}{\rm d} x_{i_1}\wedge {\rm d} y_{i_1}\wedge\cdots\wedge {\rm d}y_{i_j}\wedge\cdots\wedge {\rm d}x_{i_k}\wedge {\rm d} y_{i_k} \\
 \hphantom{\omega_0^k=}{} - y_{i_j}{\rm d}_{i_1}\wedge {\rm d}y_{i_1}\wedge\cdots\wedge {\rm d} x_{i_j}\wedge\cdots\wedge {\rm d}x_{i_k}\wedge {\rm d}y_{i_k} \big) \big)
\end{gather*}
and call the form under the ${\rm d}$ on the right $\alpha_{2k-1}$. In order to compute ${\rm d}^{\Lambda}\alpha_{2k-1}$, observe that $\star_s\alpha_{2k-1}=-\alpha_{2n-2k+1}$. Hence ${\rm d}^{\Lambda} \alpha_{2k-1}=\star_s\big({-}\omega_0^{n-k+1}\big)=-\omega_0^{k-1}$. It follows that the constant~$c_{\omega_0^{k}}$ is $(-1)^k$, giving the epimorphism as claimed. Similar computations prove the assertion about~$H^{\bullet}_{{\rm d}{\rm d}^{\Lambda}}$ with the obvious difference that it is not~$\omega_0^k$ that provides a representative of~1 in each degree: it is instead represented by the antiderivative of these forms. This is notably different behaviour than that present on the compact symplectic manifolds.
\end{proof}

This of course contradicts the K\"unneth formula. Observe that the same reasoning applies to any contractible symplectic manifold, with any symplectic structure. In particular, these cohomologies will not distinguish between exotic symplectic structures on~$\mathbb{R}^{2n}$. Note that this example breaks the dualities between the two symplectic cohomologies established in~\cite{tseng}, even though $\omega^n$ represents a non-zero class (a ``fundamental class'' for the duality pairing) in one of them.
\begin{Remark}
Even having established some analogue of the Leray--Serre spectral sequence for the symplectic cohomology (which the previous proposition shows not to be straightforward) we encounter two other obstacles. The proof in \cite{aziztopologiczny} establishes connections between the basic cohomology of a Riemannian foliation $H^{\bullet}(M,\mathcal{F})$ and invariant basic cohomology of its canonical lifting $H^{\bullet}_{{\rm SO}(n)}\big(\operatorname{SO} M,\mathcal{F}^{{\rm SO}}\big)$, and then between the basic cohomology of the canonical lifting $H^{\bullet}\big(\operatorname{SO} M,\mathcal{F}^{{\rm SO}}\big)$ and cohomology of some compact manifold $H^{\bullet}(W)$. To connect all four there is a well known and crucial isomorphism between the non-invariant de Rham cohomology and its $G$-invariant counterpart for a compact connected~$G$ that boils down to the Stokes theorem. Even using the compact quotient $\bigslant{\operatorname{U} M}{\mathbb{T}^n}$ and the compact group ${\rm U}(n)$ we still lack this tool, since there is no natural candidate for the Stokes theorem analogue for operators~${\rm d}+{\rm d}^{\Lambda}$ and~${\rm d}{\rm d}^{\Lambda}$. This may indicate that $G$-invariant symplectic cohomology will exhibit some interesting behaviour.
\end{Remark}

\section{Proof of the finiteness theorem} \label{wreszciedowod}

In this part we give the promised proof of Theorem \ref{main}. We follow closely \cite{tseng} in proving the following.
\begin{Theorem}
Let $(M,\mathcal{F},g,\omega)$ be a codimension $2n$ Riemannian foliation with a transverse symplectic structure. Then $H^{\bullet}_{{\rm d}+{\rm d}^{\Lambda}}(M\slash\mathcal{F})$ and $H^{\bullet}_{{\rm d}{\rm d}^{\Lambda}}(M\slash\mathcal{F})$ are finite dimensional.
\end{Theorem}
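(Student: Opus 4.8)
The plan is to transplant the Hodge-theoretic argument of Tseng and Yau \cite{tseng} to the transverse setting, letting the Riemannian structure supply nothing but a genuine elliptic theory through the Kamber--Tondeur formalism of \cite{azizeliptyczny}. The guiding principle is that the two symplectic cohomologies are governed by the purely algebraic Lefschetz $\mathfrak{sl}(2,\mathbb{R})$-action, which passes to basic forms verbatim, whereas the finite dimension must come from ellipticity, and that is the only place the metric should be needed. To prepare the analysis I would first fix a bundle-like metric adapted to $\mathcal{F}$ with basic mean curvature (available on any Riemannian foliation over a compact manifold). This makes the basic inner product $\langle\cdot,\cdot\rangle$ of the Preliminaries behave correctly: the formal adjoint of ${\rm d}$ computed on basic forms is the honest transverse codifferential, and the basic de Rham Laplacian is transversally elliptic, self-adjoint, and carries the El~Kacimi basic Hodge decomposition with finite-dimensional harmonic space.

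Next I would set up the transverse Lefschetz triple $L=\omega\wedge\cdot$, its basic adjoint $\Lambda$, and the grading operator $H$. Since $\omega$ is basic and $\star_s$ preserves basic forms, all of $L$, $\Lambda$, $H$, ${\rm d}$, and ${\rm d}^{\Lambda}=\star_s{\rm d}\star_s$ act on $\Omega^{\bullet}(M\slash\mathcal{F})$ and satisfy the same commutation relations as in the non-foliated case. Following \cite[Section~3]{tseng} I would then build the two fourth-order symplectic Laplacians attached to the complexes computing $H^{\bullet}_{{\rm d}+{\rm d}^{\Lambda}}$ and $H^{\bullet}_{{\rm d}{\rm d}^{\Lambda}}$, of the schematic form
\[
\Delta_{{\rm d}+{\rm d}^{\Lambda}}=({\rm d}{\rm d}^{\Lambda})({\rm d}{\rm d}^{\Lambda})^{\dagger}+({\rm d}{\rm d}^{\Lambda})^{\dagger}({\rm d}{\rm d}^{\Lambda})+{\rm d}^{\dagger}{\rm d}+\big({\rm d}^{\Lambda}\big)^{\dagger}{\rm d}^{\Lambda},
\]
together with the analogous $\Delta_{{\rm d}{\rm d}^{\Lambda}}$, where $\dagger$ denotes the adjoint for $\langle\cdot,\cdot\rangle$. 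The symbol computation showing these operators are elliptic is identical to the one in \cite{tseng}; restricted to conormal covectors it shows that they are transversally elliptic, and they are manifestly basic and self-adjoint.

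I would then invoke the Kamber--Tondeur/El~Kacimi machinery of \cite{azizeliptyczny}: a self-adjoint, transversally elliptic, basic operator on a compact Riemannian foliation has finite-dimensional kernel on basic forms and yields a basic Hodge decomposition. Applied to $\Delta_{{\rm d}+{\rm d}^{\Lambda}}$ and $\Delta_{{\rm d}{\rm d}^{\Lambda}}$, this gives finite-dimensional harmonic spaces, which I would identify with $H^{\bullet}_{{\rm d}+{\rm d}^{\Lambda}}(M\slash\mathcal{F})$ and $H^{\bullet}_{{\rm d}{\rm d}^{\Lambda}}(M\slash\mathcal{F})$ exactly as in the compact non-foliated case; the identification is a formal consequence of the $\mathfrak{sl}(2,\mathbb{R})$-relations, just as in \cite{tseng}.

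The hard part will be fusing the last two steps: the Kamber--Tondeur theory was designed for the second-order basic Laplacian, and I must check that it survives the passage to these fourth-order symplectic Laplacians. Concretely, one must verify that the basic adjoints $\dagger$ are the genuine restrictions of the global adjoints (this is exactly what the basic mean curvature condition secures), that the operators remain formally self-adjoint and transversally elliptic after these substitutions, and that the foliate Sobolev and Fredholm package of \cite{azizeliptyczny} applies to fourth-order basic operators and not only to the basic Laplacian. Once transverse ellipticity and the basic Hodge decomposition are established, finiteness follows with no further geometric input.
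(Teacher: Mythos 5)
Your outline coincides with the route the paper actually takes: transplant the Tseng--Yau fourth-order symplectic Laplacian to basic forms, absorb the mean curvature corrections into the adjoint formulas (${\rm d}^*=\pm\star_m{\rm d}-\kappa\wedge\star_m$, and similarly for ${\rm d}^{\Lambda}=({\rm d}^c)^*$), observe that these corrections do not touch the principal symbol, invoke the El~Kacimi-Alaoui transversally elliptic package to get a basic Hodge decomposition with finite-dimensional kernel, and identify that kernel with $H^{\bullet}_{{\rm d}+{\rm d}^{\Lambda}}(M\slash\mathcal{F})$ by the standard orthogonality computation. The technical worries you single out at the end (basicness of the adjoints, applicability of the foliated Fredholm theory beyond the second-order Laplacian) are the right ones, and the paper disposes of them by citing \cite{pawel} and \cite{azizeliptyczny}.

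The one step that would fail as written is the ellipticity of the operator you display. The operator the paper (following \cite{tseng}) uses is
\begin{gather*}
D_{{\rm d}+{\rm d}^{\Lambda}}=\big({\rm d}{\rm d}^{\Lambda}\big)\big({\rm d}{\rm d}^{\Lambda}\big)^*+\big({\rm d}{\rm d}^{\Lambda}\big)^*\big({\rm d}{\rm d}^{\Lambda}\big)+{\rm d}^*{\rm d}^{\Lambda}{\rm d}^{\Lambda}{}^*{\rm d}+{\rm d}^{\Lambda}{}^*{\rm d}{\rm d}^*{\rm d}^{\Lambda}+\big({\rm d}^*{\rm d}+{\rm d}^{\Lambda}{}^*{\rm d}^{\Lambda}\big),
\end{gather*}
and the two cross terms ${\rm d}^*{\rm d}^{\Lambda}{\rm d}^{\Lambda}{}^*{\rm d}+{\rm d}^{\Lambda}{}^*{\rm d}{\rm d}^*{\rm d}^{\Lambda}$ that your ``schematic'' version omits are precisely what make the fourth-order principal symbol invertible. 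Without them the leading symbol is $\sigma({\rm d}{\rm d}^{\Lambda})\sigma({\rm d}{\rm d}^{\Lambda})^*+\sigma({\rm d}{\rm d}^{\Lambda})^*\sigma({\rm d}{\rm d}^{\Lambda})$, which has nontrivial kernel---on $0$-forms, for instance, both $\sigma({\rm d}^{\Lambda})$ and $\sigma({\rm d}^*)$ vanish for degree reasons, so both summands die---and the second-order terms ${\rm d}^{\dagger}{\rm d}+({\rm d}^{\Lambda})^{\dagger}{\rm d}^{\Lambda}$ cannot repair the symbol of an operator treated as fourth order. So the claim that ``the symbol computation is identical to the one in \cite{tseng}'' is only true for the full operator; with the cross terms restored, the remainder of your argument is exactly the paper's proof.
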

\begin{proof}
We concentrate only on the $({\rm d}+{\rm d}^{\Lambda})$-cohomology, since the proof for the other one follows along the same lines.

Without loss of generality we can have a compatible triple $(M,\mathcal{F},g,J,\omega)$ where we perhaps changed the metric but left $\omega$ intact (and the foliation is still Riemannian). Both~$g$ and~$\omega$ induce star operators via non-degenerate pairings as explained in Section~\ref{preliminaries}. We denote them~$\star_{m}$ and~$\star_{s}$, respectively.

The propositions below follow by exactly the same arguments as in~\cite{tseng}. Denote the formal adjoint of an operator $P$ by $P^*$, $\langle Px,y\rangle=\langle x,P^*y\rangle$. We proceed as follows:
\begin{enumerate}\itemsep=0pt
\item[1)] the operator
\begin{gather*}
D_{{\rm d}+{\rm d}^{\Lambda}}= \big({\rm d}{\rm d}^{\Lambda}\big)\big({\rm d}{\rm d}^{\Lambda}\big)^* + \big({\rm d}{\rm d}^{\Lambda}\big)^*\big({\rm d}{\rm d}^{\Lambda}\big)+{\rm d}^*{\rm d}^{\Lambda}{\rm d}^{\Lambda}{}^*{\rm d} + {\rm d}^{\Lambda}{}^*{\rm d}{\rm d}^*{\rm d}^{\Lambda} + \big({\rm d}^*{\rm d} +{\rm d}^{\Lambda}{}^*{\rm d}^{\Lambda}\big)
\end{gather*}
is a self-adjoint elliptic operator;
\item[2)] therefore
\begin{gather*}
\Omega^k(M\slash \mathcal{F})= \ker D_{{\rm d}+{\rm d}^{\Lambda}}\oplus {\rm d}{\rm d}^{\Lambda}\Omega^k(M\slash\mathcal{F}) \oplus \big({\rm d}^*\Omega^{k-1}(M\slash\mathcal{F})+{\rm d}^{\Lambda}{}^*\omega^{k-1}(M\slash\mathcal{F})\big)
\end{gather*}
with the kernel being of finite dimension;
\item[3)] $H_{{\rm d}+{\rm d}^{\Lambda}}(M\slash\mathcal{F})$ is isomorphic with $\ker D_{{\rm d}+{\rm d}^{\Lambda}}$, hence of finite dimension.
\end{enumerate}
For 1), the self-adjointness is obvious. As for the ellipticity, in order to show that the principal symbol is an isomorphism, we must compute how the adjoints of the operators involved behave. According to \cite[Proposition~3.3]{tseng} and \cite[Proposition~3.1]{pawel}, we have the following equalities and equivalence of the principal symbols
\begin{gather*}
{\rm d}^*= \pm \star_{m}{\rm d} - \kappa\wedge\star_{m},\\
{\rm d}^{\Lambda}=({\rm d}^{c})^*\simeq \sqrt{-1}(\partial -\bar{\partial})^*=\pm(\star_m\partial - \star_{m}\bar{\partial} -\kappa_1\wedge \star_{m} + \kappa_2\wedge \star_{m}),
\end{gather*}
where $\kappa$, $\kappa_1$, and $\kappa_2$ are the various parts of the mean curvature form and the operators $\partial$, $\bar{\partial}$, and ${\rm d}^{c}$ come from the compatible almost complex structure. We conclude that the only factors contributing to the principal symbol of $D_{{\rm d}+{\rm d}^{\Lambda}}$ are $\partial$ and $\bar{\partial}$, from which the ellipticity follows precisely as in \cite[Theorem~3.5]{tseng}. This proves the decomposition in~2). The isomorphism in~3) follows (again, exactly as in~\cite{tseng}): $\ker ({\rm d}+{\rm d}^{\Lambda})=\ker D_{{\rm d}+{\rm d}^{\Lambda}}\oplus {\rm d}{\rm d}^{\Lambda}\Omega^k(M\slash\mathcal{F})$ because, should any ${\rm d}^*A+{\rm d}^{\Lambda}{}^*B$ be $({\rm d}+{\rm d}^{\Lambda})$-closed, then
\begin{gather*}
0=\big\langle A,{\rm d}\big({\rm d}^*A+{\rm d}^{\Lambda}{}^*B\big)\big\rangle+\big\langle B,{\rm d}^{\Lambda} \big({\rm d}^*A+{\rm d}^{\Lambda}{}^*B\big)\big\rangle \\
\hphantom{0}{} =\big\langle {\rm d}^*A,{\rm d}^*A+{\rm d}^{\Lambda}{}^*B\big\rangle+\big\langle {\rm d}^{\Lambda}{}^*B,{\rm d}^*A+{\rm d}^{\Lambda}{}^*B\big\rangle =\big\|{\rm d}^*A+{\rm d}^{\Lambda}{}^*B\big\|^2.
\end{gather*}
Thus each class in $H^{\bullet}_{{\rm d}+{\rm d}^{\Lambda}}(M\slash\mathcal{F})$ has a unique representative in $\ker D_{{\rm d}+{\rm d}^{\Lambda}}$.
\end{proof}

\section{Final remarks}

While the methods to prove Theorem \ref{main} for $H^{\bullet}_{{\rm d}+{\rm d}^{\Lambda}}(M\slash\mathcal{F})$ and $H^{\bullet}_{{\rm d}{\rm d}^{\Lambda}}(M\slash\mathcal{F})$ are the same, we must stress that their roles in the foliated case are not, contrary to the non-foliated theory. Indeed, the dualities of Proposition~3.24 in~\cite{tseng} and Theorem~2.16 in~\cite{angela} give in the compact case the inequalities
\begin{gather*}
\dim H^{\bullet}_{{\rm dR}}(M)\leq \dim H^{\bullet}_{{\rm d}+{\rm d}^{\Lambda}}(M),\qquad \dim H^{\bullet}_{{\rm dR}}(M)\leq \dim H^{\bullet}_{{\rm d}{\rm d}^{\Lambda}}(M),
\end{gather*}
while substantially less is true in the foliated case
\begin{gather*}
\dim H^{\bullet}_{{\rm dR}}(M\slash\mathcal{F})\leq \dim H^{\bullet}_{{\rm d}+{\rm d}^{\Lambda}}(M\slash\mathcal{F})+\dim H^{\bullet}_{{\rm d}{\rm d}^{\Lambda}}(M\slash\mathcal{F})
\end{gather*}
as Lemma 1 and the example of Section 4 in \cite{pawelija} show. We recall it here.
\begin{Example}\label{przy}
Consider the transversally symplectic dimension one foliation $\mathcal{F}_L$ given by the suspension of the matrix $L=\left[\begin{smallmatrix}1 & 1\\ 0 & 1\end{smallmatrix}\right]$ acting on the torus, i.e., \smash{$\bigslant{\mathbb{T}^2\times [0,1]}{(t,0)\sim (Lt,1)}=M$}. The transverse manifold~$T$ of Definition~\ref{fol} is the torus $\mathbb{T}^2$ and the pseudogroup $\Gamma$ is the cyclic group generated by $L$. One can check that $H^0(M\slash\mathcal{F}_L)=H^1(M\slash\mathcal{F}_L)=\mathbb{R}$ and \smash{$H^2(M\slash\mathcal{F}_L)\simeq\mathcal{C}^{\infty}(\mathbb{S}^1\!,\mathbb{R})$}, the infinite dimensional space of smooth functions on a circle. However, the basic symplectic cohomologies (for a standard symplectic structure on~$\mathbb{T}^2$, invariant by~$L$) turn out to be
\begin{enumerate}\itemsep=0pt
\item[1)] $H^{0}_{{\rm d}+{\rm d}^{\Lambda}}(M\slash\mathcal{F}_L)=H^{2}_{{\rm d}+{\rm d}^{\Lambda}}(M\slash\mathcal{F}_L)=\mathbb{R}$,
\item[2)] $H^{1}_{{\rm d}+{\rm d}^{\Lambda}}(M\slash\mathcal{F}_L)=\mathcal{C}^{\infty}\big(\mathbb{S}^1\big)$
\end{enumerate}
and
\begin{enumerate}\itemsep=0pt
\item[1)] $H^{0}_{{\rm d}{\rm d}^{\Lambda}}(M\slash\mathcal{F}_L)=H^{2}_{{\rm d}{\rm d}^{\Lambda}}(M\slash\mathcal{F}_L)=\mathcal{C}^{\infty}\big(\mathbb{S}^1\big)$,
\item[2)] $H^{1}_{{\rm d}{\rm d}^{\Lambda}}(M\slash\mathcal{F}_L)=\mathbb{R}$.
\end{enumerate}
This example suggests that one should not hope to extend finiteness results on symplectic cohomologies to any class of symplectic foliations broader than Riemannian foliations. We note that this dependence on metric structure is somewhat unexpected.
\end{Example}

We note that Theorem \ref{main2} and Remark \ref{main3} applied to the Kodaira--Thurston manifold (cf.~\cite{thurston}) produce, somewhat trivially, another non-K\"ahler symplectic manifold. The mani\-fold~$KT$ is parallelizable, hence the frame bundle is trivial and so is the quotient bundle we constructed, $KT\times\mathbb{S}^2$. Its first Betti number is again 3 and again it does not satisfy the hard Lefschetz theorem. We also note that application to $\mathbb{CP}^2$ yields a symplectic structure satisfying the hard Lefschetz theorem: by elementary algebraic topology the bundle
\begin{gather*}
\mathbb{S}^2 \to \bigslant{\operatorname{Sp}\mathbb{CP}^2}{\mathbb{T}^2} \to \mathbb{CP}^2
\end{gather*}
has cohomology ring isomorphic to $H^{\bullet}\big(\mathbb{S}^2\big)\otimes H\big(\mathbb{CP}^2\big)$.

\pdfbookmark[1]{References}{ref}
\LastPageEnding

\end{document}